\newtheorem{theorem}{Theorem}[section]
\newtheorem{lemma}[theorem]{Lemma}
\newtheorem{problem}[theorem]{Problem}
\theoremstyle{definition}
\theoremstyle{remark}
\numberwithin{equation}{section}
\begin{document}

\title[Normalmeasure]{A normal measure on a compact connected space}

\author[G.\ Plebanek]{Grzegorz Plebanek}
\address{Instytut Matematyczny, Uniwersytet Wroc\l awski}
\email{grzes@math.uni.wroc.pl}
\thanks{Partially supported by NCN grant 2013/11/B/ST1/03596 (2014-2017).}

\subjclass[2010]{28C15, 54D05}





\newcommand{\con}{\mathfrak c}
\newcommand{\eps}{\varepsilon}
\newcommand{\alg}{\mathfrak A}
\newcommand{\algb}{\mathfrak B}
\newcommand{\algc}{\mathfrak C}
\newcommand{\ma}{\mathfrak M}
\newcommand{\pa}{\mathfrak P}
\newcommand{\BB}{\protect{\mathcal B}}
\newcommand{\AAA}{\mathcal A}
\newcommand{\CC}{{\mathcal C}}
\newcommand{\FF}{{\mathcal F}}
\newcommand{\GG}{{\mathcal G}}
\newcommand{\LL}{{\mathcal L}}
\newcommand{\UU}{{\mathcal U}}
\newcommand{\VV}{{\mathcal V}}
\newcommand{\HH}{{\mathcal H}}
\newcommand{\DD}{{\mathcal D}}
\newcommand{\ZZ}{{\mathcal Z}}
\newcommand{\RR}{\protect{\mathcal R}}
\newcommand{\ide}{\mathcal N}
\newcommand{\btu}{\bigtriangleup}
\newcommand{\hra}{\hookrightarrow}
\newcommand{\ve}{\vee}
\newcommand{\we}{\cdot}
\newcommand{\de}{\protect{\rm{\; d}}}
\newcommand{\er}{\mathbb R}
\newcommand{\qu}{\mathbb Q}
\newcommand{\supp}{{\rm supp} }
\newcommand{\card}{{\rm card} }
\newcommand{\wn}{{\rm int} }
\newcommand{\wh}{\widehat }
\newcommand{\ult}{{\rm ULT}}
\newcommand{\vf}{\varphi}
\newcommand{\osc}{{\rm osc}}
\newcommand{\ol}{\overline}
\newcommand{\me}{\protect{\bf v}}
\newcommand{\ex}{\protect{\bf x}}
\newcommand{\stevo}{Todor\v{c}evi\'c}
\newcommand{\cc}{\protect{\mathfrak C}}
\newcommand{\scc}{\protect{\mathfrak C^*}}
\newcommand{\lra}{\longrightarrow}
\newcommand{\sm}{\setminus}
\newcommand{\uhr}{\upharpoonright}
\newcommand{\en}{\mathbb N}
\newcommand{\sub}{\subseteq}
\newcommand{\ms}{$(M^*)$}
\newcommand{\m}{$(M)$}
\newcommand{\MA}{MA$(\omega_1)$}
\newcommand{\clop}{\protect{\rm Clop} }

\begin{abstract}
We present a construction of a compact connected space which supports a normal probability measure.
\end{abstract}

\maketitle

\section{Introduction}

If $K$ is a compact Hausdorff space then we denote by $P(K)$ the set of all probability regular Borel measures on $K$.
We write $\ZZ(K)$ for the family of all closed $G_\delta$ subsets of $K$. Since every compact space is normal, $Z\in\ZZ(K)$ if and only if $Z$ is a zero set,
i.e.\ $Z=f^{-1}(0)$ for some continuous function $f:K\to\er$.

A measure $\mu\in P(K)$ is
{\em normal} if $\mu$  is order-continuous on the Banach lattice $C(K)$.  Equivalently, $\mu(F)=0$ whenever $F\sub K$ is a closed set with empty interior
(\cite{DDLS}, Theorem 4.6.3).
A typical example of a normal measure is the natural measure defined on the Stone space of the measure algebra $\alg$ of the Lebesgue measure $\lambda$
on $[0,1]$. Since the algebra $\alg$ is complete, its Stone space is extremely disconnected.

By a result from \cite{FP64} if $K$ is a locally connected compactum then no measure $\mu\in P(K)$ can be normal, cf.\ \cite{DDLS}, Proposition 4.6.20.
The following problem was posed in \cite{FP64}, cf.\ \cite{FL80}.

\begin{problem}\label{problem}
Suppose that $K$ is a compact and $\mu\in P(K)$ is a normal measure. Must $K$ be disconnected?
\end{problem}

We show below that the answer is negative, namely we prove the following result.

\begin{theorem}\label{main}
There is a compact connected space $L$ of weight $\con$ which is the support of a normal measure.
\end{theorem}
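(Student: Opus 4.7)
\noindent The strategy is to realize $L$ as a continuous image of the Stone space $S$ of $\alg$, equipped with its canonical normal measure $\hat\lambda\in P(S)$ of full support. If $\pi\colon S\to L$ is a continuous surjection onto a compact Hausdorff space, then $\mu:=\pi_{*}\hat\lambda\in P(L)$ has full support, and $\mu$ is normal if and only if $\pi$ is \emph{skeletal}, meaning $\pi^{-1}[F]$ has empty interior in $S$ whenever $F\sub L$ is closed with empty interior. The reason is that on the extremely disconnected $S$ the normal measure $\hat\lambda$ vanishes precisely on the closed sets of empty interior. It therefore suffices to produce a compact connected space $L$ of weight $\con$ together with a skeletal continuous surjection $\pi\colon S\to L$.

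\smallskip

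\noindent I would realize $L$ as the image in $[0,1]^{\con}$ of a diagonal map $\pi=\langle f_\alpha\rangle_{\alpha<\con}$, where the family $\{f_\alpha\}\sub C(S,[0,1])$ is constructed by a transfinite recursion of length $\con$; compactness and the bound $w(L)\le\con$ are then automatic. Two bookkeeping lists drive the recursion. To force connectedness, I enumerate all pairs $(A,B)$ of disjoint nonempty clopen subsets of $S$ drawn from a fixed clopen base of size $\con$, and at the corresponding stage add an $f_\alpha$ with $f_\alpha[A]=\{0\}$ and $f_\alpha[B]=\{1\}$ that also surjects onto $[0,1]$ on some auxiliary positive-measure clopen; the arc appearing in the image prevents any clopen subset of $L$ from separating $\pi[A]$ from $\pi[B]$. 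To force skeletality, I enumerate the nonempty clopens $U\sub S$ from the base and at the corresponding stage add an $f_\alpha$ arranged so that $\pi[U]$ contains a basic open subset of $L$. Since each list has length $\con$, a recursion of length $\con$ can in principle accommodate all requirements.

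\smallskip

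\noindent The main obstacle is the interplay between these two families of tasks. The interpolating functions needed for connectedness enlarge the fibers of $\pi$, so preimages of closed nowhere-dense subsets of $L$ threaten to acquire interior in $S$ and thereby destroy skeletality; conversely, functions that witness skeletality can reintroduce clopen separations into $L$. The heart of the argument is therefore to carry out the recursion so that both types of requirement are satisfied simultaneously, without damaging progress already made on the other -- this is where the technical content of the construction lies. Once this is accomplished, $L=\pi[S]$ is a compact connected space of weight $\le\con$, and $\mu=\pi_{*}\hat\lambda$ is a normal probability measure on $L$ with full support, as required.
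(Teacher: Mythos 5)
Your reduction is correct as far as it goes: since the canonical measure $\hat\lambda$ on the Stone space $S$ of $\alg$ is strictly positive and vanishes exactly on the closed sets with empty interior, a continuous surjection $\pi\colon S\to L$ pushes it to a strictly positive measure which is normal precisely when $\pi^{-1}(F)$ has empty interior for every closed nowhere dense $F\sub L$. But the proposal stops exactly where the proof has to begin. Both families of requirements you list are properties of the \emph{final} diagonal map, not of any single coordinate: the condition ``$\pi[U]$ contains a basic open subset of $L$'' can be destroyed by every coordinate added later (the image inside a larger cube shrinks), and ``no clopen subset of $L$ separates $\pi[A]$ from $\pi[B]$'' amounts to the existence of $a\in A$, $b\in B$ with $f_\alpha(a)=f_\alpha(b)$ for \emph{all} $\alpha<\con$, again a constraint on every later stage. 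The concrete moves you describe do not secure either requirement: a coordinate with $f_\alpha[A]=\{0\}$, $f_\alpha[B]=\{1\}$ that is onto $[0,1]$ on an auxiliary clopen $W$ produces no arc in $L$ --- $\pi[W]$ depends on all the other coordinates and will in general be badly disconnected --- and this very coordinate separates $\pi[A]$ from $\pi[B]$ unless the remaining coordinates are arranged to glue them back together. So the entire content of the theorem, namely how to carry $\con$ many such global requirements of two conflicting kinds through a recursion of length $\con$ while keeping previously met requirements alive (some device of ``freezing'' clopen sets on which all later coordinates factor through earlier ones, or a reflection argument, would have to be specified and proved consistent), is precisely the part you acknowledge but do not supply. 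As written this is a plan, not a proof.

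There is also a structural point worth noting: anchoring the construction to the Stone space of the \emph{separable} measure algebra commits you to a strictly stronger statement than Theorem \ref{main}. If $\mu=\pi_*\hat\lambda$ is normal and strictly positive on a connected $L$, then the regular open algebra of $L$ is isomorphic to the measure algebra of $\mu$, which embeds into $\alg$ and is therefore separable; your route would thus show that the random algebra is the regular open algebra of a compact connected space. Nothing in the paper yields this, and it is not needed for the theorem: the paper goes in the opposite direction, building $L$ as the limit of an $\omega_1$-inverse system in which each successor step (Lemma \ref{basic}) attaches a collar $F\times[0,1]$ over the support of the measure on a positive-measure zero set, so that its preimage acquires interior; normality of the limit measure is then checked only on zero sets (Lemma \ref{1:1}), which by Lemma \ref{2:is2} all appear at countable stages. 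In that construction the measure becomes nonseparable (already the first application of Lemma \ref{basic2} adds, for continuum many zero sets of measure at least $1/2$, pairwise uniformly separated ``halving'' generators), so the co-absoluteness with the Stone space of $\alg$ that your approach requires is an additional, and possibly much harder, demand. In short: the reduction is fine, but the construction it calls for is missing, the stage-by-stage tactics offered would not achieve it, and the target you have set is stronger than what the theorem asserts.
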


 I wish to thank H.\ Garth Dales for the discussion concerning the subject of this note and for making the preliminary version of the forthcoming book \cite{DDLS} available to me.

\section{Preliminaries}

Recall that $\mu\in P(K)$ is said to be {\em strictly positive} or
{\em fully supported by}  $K$  if
$\mu(U)>0$ for every non-empty open set $U\sub K$.

\begin{lemma}\label{1:1}
Let $K$ be a compact space, and suppose that $\mu$ is a strictly positive measure on $K$ such that
$\mu(Z)=0$ for every $Z\in\ZZ(K)$ with empty interior. Then $\mu$ is a normal measure.
\end{lemma}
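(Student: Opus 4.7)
My plan is to reduce the statement to the standing hypothesis by producing, for any closed $F\sub K$ with empty interior, a single zero set $Z\in\ZZ(K)$ such that $F\sub Z$, $\mu(Z)=\mu(F)$, and $\wn Z=\emptyset$; once such a $Z$ is in hand, the hypothesis yields $\mu(Z)=0$, and hence $\mu(F)=0$, so by the characterization of normal measures quoted in the introduction, $\mu$ is normal.

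To construct $Z$, I would first use outer regularity of $\mu$ to choose open sets $U_n\supseteq F$ with $\mu(U_n)<\mu(F)+1/n$, and then apply Urysohn's lemma to the disjoint closed sets $F$ and $K\sm U_n$ to obtain continuous $f_n:K\to[0,1]$ with $f_n=0$ on $F$ and $f_n=1$ on $K\sm U_n$. Setting $Z_n:=f_n^{-1}(0)$ gives a zero set with $F\sub Z_n\sub U_n$, and the countable intersection $Z:=\bigcap_n Z_n$ is again a zero set (e.g.\ the zero set of $\sum_n 2^{-n}f_n$). Clearly $F\sub Z$, and $\mu(Z)\le\mu(U_n)<\mu(F)+1/n$ for every $n$ forces $\mu(Z)=\mu(F)$.

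The decisive step is to verify $\wn Z=\emptyset$, and this is where strict positivity enters. Suppose for contradiction that $W:=\wn Z$ is nonempty. Since $F$ has empty interior, $W$ cannot be contained in $F$, so $W\sm F$ is a nonempty open set, and strict positivity gives $\mu(W\sm F)>0$. But $W\sm F\sub Z\sm F$, which contradicts $\mu(Z\sm F)=\mu(Z)-\mu(F)=0$. Hence $\wn Z=\emptyset$, and the hypothesis of the lemma applied to $Z$ yields $\mu(Z)=0$, whence $\mu(F)=0$, as required.

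I do not expect a serious obstacle; the only insight needed is that the canonical $G_\delta$-approximation of $F$ from above, combined with strict positivity, automatically has empty interior, even though $F$ itself may fail to be a zero set.
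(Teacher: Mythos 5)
Your proof is correct, but it takes a genuinely different route from the paper's. The paper reduces the lemma to a purely topological claim: every closed $F\sub K$ with empty interior is contained in some $Z\in\ZZ(K)$ with empty interior. This is proved by choosing a maximal family $\FF$ of continuous functions $K\to[0,1]$ vanishing on $F$ and having pairwise zero products; since $K$ carries a strictly positive measure it satisfies the countable chain condition, so $\FF=\{f_n:n\in\en\}$ is countable, and $Z=f^{-1}(0)$ for $f=\sum_n 2^{-n}f_n$ is a zero set containing $F$ whose interior is empty by maximality of $\FF$. You instead exploit the measure directly: outer regularity (available since $P(K)$ consists of regular Borel measures) yields a zero-set hull $Z\supseteq F$ with $\mu(Z)=\mu(F)$, and strict positivity then forces $\wn Z=\emptyset$, because a nonempty interior $W$ would give the nonempty open set $W\sm F$ positive measure inside the null set $Z\sm F$. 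Each of your steps checks out: Urysohn gives $F\sub Z_n\sub U_n$, the intersection $\bigcap_n Z_n$ is again a zero set, the measure computation is valid for a finite measure, and $W\not\sub F$ follows from $\wn F=\emptyset$. The trade-off is that your argument uses regularity and strict positivity of $\mu$ at the construction stage, while the paper's claim is measure-free (valid in any ccc compactum) with the measure entering only through ccc and the final application of the hypothesis; your version is more self-contained measure-theoretically, the paper's isolates a reusable topological fact.
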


\begin{proof}
Assume that there is a closed set $F\sub K$ with empty interior but with $\mu(F)>0$. Then we derive a contradiction by the following observation.
\medskip

{\sc Claim.} Every closed set $F\sub K$ with empty interior is contained in some $Z\in\ZZ(K)$ with empty interior.
\medskip

Indeed, consider a maximal family $\FF$ of continuous functions $K\to [0,1]$ such that $f|F=0$ for $f\in\FF$ and $f\cdot g=0$ whenever $f,g\in\FF$, $f\neq g$.
Then $\FF$ is necessarily countable because $K$, being the support of a measure, satisfies the countable chain condition. Write $\FF=\{f_n: n\in\en\}$ and let
$f=\sum_n 2^{-n}f_n$ and  $Z=f^{-1}(0)$. Then the function $f$ is continuous so that $Z\in\ZZ(K)$. We have $Z\supseteq F$ and the interior of $Z$ must be empty by the maximality of $\FF$.
\end{proof}

If $f:K\to L$ is a continuous map and $\mu\in P(K)$ then the measure $f[\mu]\in P(L)$ is defined by
$f[\mu](B)=\mu(f^{-1}(B))$ for every Borel set $B\sub L$.

We shall consider inverse systems of compact spaces with measures of the form
\[\langle K_\alpha,\mu_\alpha, \pi^\alpha_\beta: \beta<\alpha<\kappa\rangle,\]
where  $\kappa$ is an ordinal number and for all $\gamma <\beta<\alpha<\kappa$ we have
\begin{itemize}\label{1:2}
\item[\bf 2(i)] $K_\alpha$ is a compact space and $\mu_\alpha\in P(K_\alpha)$;
\item[\bf 2(ii)] $\pi^\alpha_\beta:K_\alpha\to K_\beta$ is a continuous surjection;
\item[\bf 2(iii)]  $\pi^\beta_\gamma\circ \pi^\alpha_\beta=\pi^\alpha_\gamma$;
\item[\bf 2(iv)] $\pi^\alpha_\beta[\mu_\alpha]=\mu_\beta$.
\end{itemize}

The following summarises basic facts on inverse systems satisfying 2(i)-(iv).

\begin{theorem} \label{2:is}
Let $K$ be the limit of the system with uniquely defined continuous surjections $\pi_\alpha:K\to K_\alpha$ for
$\alpha<\kappa$.

\begin{itemize}
\item[(a)] $K$ is a compact space and $K$ is connected whenever  all the space $K_\alpha$ are connected.
\item[(b)] There is the unique $\mu\in P(K)$ such that $\pi_\alpha[\mu]=\mu_\alpha$ for $\alpha<\kappa$.
\item[(c)] If every $\mu_\alpha$ is strictly positive then $\mu$ is strictly positive.
\end{itemize}
\end{theorem}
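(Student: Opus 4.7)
\smallskip
\noindent\textbf{Proof proposal.}
The plan is to handle the three parts separately, using classical inverse-limit techniques; only part (b) requires genuine measure-theoretic work, and even then it is a version of the Kolmogorov/Bochner construction.

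For part (a), realise $K$ as the closed subspace of $\prod_{\alpha<\kappa} K_\alpha$ consisting of compatible threads; this is closed in a compact product, hence compact. For connectedness, suppose $K=A\cup B$ with $A,B$ disjoint, nonempty and closed. The sets $A_\alpha:=\pi_\alpha(A)$ and $B_\alpha:=\pi_\alpha(B)$ are compact with union $K_\alpha$ (using surjectivity of $\pi_\alpha$). If $A_\alpha\cap B_\alpha\neq\emptyset$ for every $\alpha$, then $(A_\alpha\cap B_\alpha)_{\alpha<\kappa}$ is an inverse system of nonempty compacta, whose limit contains a point lying simultaneously in $A$ and $B$, a contradiction. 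Hence some $K_\alpha=A_\alpha\cup B_\alpha$ is disconnected, contradicting connectedness of $K_\alpha$.

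For part (b), set
\[\AAA=\bigcup_{\alpha<\kappa}\pi_\alpha^{-1}(\text{Borel}(K_\alpha)),\]
which is an algebra of subsets of $K$ because $\kappa$ is linearly ordered and $\pi^\alpha_\beta\circ\pi_\alpha=\pi_\beta$. Define $\mu_0(\pi_\alpha^{-1}(B))=\mu_\alpha(B)$; this is well-defined by the surjectivity of the $\pi_\alpha$'s combined with condition 2(iv). Finite additivity is immediate. For $\sigma$-additivity, take $E_n\downarrow\emptyset$ in $\AAA$, fix $\eps>0$, and use inner regularity of each $\mu_{\alpha_n}$ to choose a compact $C_n\sub K_{\alpha_n}$ with $\pi_{\alpha_n}^{-1}(C_n)\sub E_n$ and $\mu_0(E_n\sm \pi_{\alpha_n}^{-1}(C_n))<\eps/2^n$; since each $\pi_{\alpha_n}^{-1}(C_n)$ is closed in the compact space $K$, the relation $\bigcap_n\pi_{\alpha_n}^{-1}(C_n)=\emptyset$ forces some finite subfamily to have empty intersection, and a standard estimate gives $\mu_0(E_N)<\eps$ for $N$ large. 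Carathéodory then extends $\mu_0$ to a countably additive measure on the $\sigma$-algebra generated by $\AAA$; a final regular extension to all Borel sets yields the desired $\mu\in P(K)$ with $\pi_\alpha[\mu]=\mu_\alpha$. Uniqueness follows because any two such $\mu,\mu'$ agree on $\AAA$, which generates the Baire $\sigma$-algebra of $K$, and regular Borel measures on a compactum are determined by their restrictions to Baire sets.

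For part (c), let $U\sub K$ be nonempty and open. Pick any $x\in U$; the subbasic neighbourhoods of $x$ are preimages $\pi_\alpha^{-1}(V)$ with $V$ open in $K_\alpha$, and since $\kappa$ is directed, a single such preimage with $V$ nonempty open lies inside $U$. Then $\mu(U)\ge \mu(\pi_\alpha^{-1}(V))=\mu_\alpha(V)>0$ by strict positivity of $\mu_\alpha$.

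The main obstacle is the countable additivity step in part (b): one must be careful that inner regularity used at each level transfers to the inverse limit, which works precisely because preimages of compact sets along $\pi_\alpha$ are closed in $K$ and hence compact. Everything else is bookkeeping.
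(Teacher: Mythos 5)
Your proof is correct in substance, but note that the paper does not prove Theorem \ref{2:is} at all: it cites Engelking's \emph{General Topology} for the topological statements (a) and explicitly remarks that the measure-theoretic parts (b), (c) ``call for a proper reference''. So your argument is a self-contained substitute for a citation, and it follows the classical route one would expect: threads in the product for compactness and connectedness, a premeasure on the algebra $\bigcup_\alpha \pi_\alpha^{-1}(\mathrm{Borel}(K_\alpha))$ with continuity at $\emptyset$ proved via inner regularity plus compactness of the sets $\pi_{\alpha_n}^{-1}(C_n)$, Carath\'eodory extension, and the basis argument for strict positivity. Two places where you lean on standard facts deserve an explicit line if this were to be written out. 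In (a), the assertion that the limit of the system $(A_\alpha\cap B_\alpha)_{\alpha}$ yields a point of $A\cap B$ needs the observation that a thread $x=(x_\alpha)$ with $x_\alpha\in\pi_\alpha(A)$ for all $\alpha$ actually lies in $A$: the closed sets $A\cap\pi_\alpha^{-1}(\{x_\alpha\})$ are nonempty and downward directed, so by compactness they have a common point, which must be $x$ since the maps $\pi_\alpha$ separate points of the limit. In (b), the final step ``a regular extension to all Borel sets yields $\pi_\alpha[\mu]=\mu_\alpha$'' hides two checks: that the Baire $\sigma$-algebra of $K$ is contained in $\sigma(\AAA)$ (via Stone--Weierstrass, every $f\in C(K)$ is a uniform limit of functions $g\circ\pi_\alpha$, hence $\sigma(\AAA)$-measurable), and that the identity $\pi_\alpha[\mu]=\mu_\alpha$, verified first on Baire sets of $K_\alpha$, passes to all Borel sets because both $\pi_\alpha[\mu]$ and $\mu_\alpha$ are regular and hence determined by their Baire restrictions; the same point fixes your uniqueness sentence, where $\AAA$ generates a $\sigma$-algebra \emph{containing} (possibly strictly) the Baire sets. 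With these routine completions the argument is sound, and it gives precisely what the paper needs, including part (c), which is correctly reduced by directedness to a single basic set $\pi_\alpha^{-1}(V)\sub U$.
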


Engelking's {\em General Topology} contains the topological part of \ref{2:is} (measure-theoretic ingredients call for a proper
reference). We also use the following fact on closed $G_\delta$ sets and inverse systems of length $\omega_1$.

\begin{lemma}\label{2:is2}
Let $K$ be the limit of an inverse system $\langle K_\alpha, \pi^\alpha_\beta: \beta<\alpha<\omega_1\rangle$. Then
for every $Z\in\ZZ(K)$, there are $\alpha<\omega_1$ and $Z_\alpha\in\ZZ(K_\alpha)$ with
$Z=\pi_\alpha^{-1}(Z_\alpha)$.
\end{lemma}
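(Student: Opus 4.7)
The plan is to write the closed $G_\delta$ set as $Z=f^{-1}(0)$ for a continuous $f:K\to [0,1]$, and to show that any such $f$ factors through some $\pi_\alpha$ with $\alpha<\omega_1$. The conclusion then follows by taking $Z_\alpha=g^{-1}(0)$ for the factor map $g:K_\alpha\to[0,1]$.

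To produce the factoring level, I would use that the topology of $K$ has a base consisting of sets of the form $\pi_\beta^{-1}(U)$ with $\beta<\omega_1$ and $U$ open in $K_\beta$. For each $n\in\en$, by continuity of $f$ and compactness of $K$, there is a finite cover of $K$ by basic open sets $V^n_1,\ldots,V^n_{k_n}$ on which the oscillation of $f$ is less than $1/n$. Each $V^n_i$ has the form $\pi_{\beta^n_i}^{-1}(W^n_i)$; since there are only countably many indices $\beta^n_i$, I let $\alpha=\sup_{n,i}\beta^n_i<\omega_1$. Replacing each $W^n_i$ by its preimage under $\pi^\alpha_{\beta^n_i}$, I may assume that every $V^n_i=\pi_\alpha^{-1}(W^n_i)$ with $W^n_i$ open in $K_\alpha$, and that the sets $W^n_i$ cover $K_\alpha$ (using that $\pi_\alpha$ is surjective).

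Now I would check that $f$ is constant on the fibres of $\pi_\alpha$. Indeed, if $\pi_\alpha(x)=\pi_\alpha(y)$, then for each $n$ there exists $i$ with $\pi_\alpha(x)\in W^n_i$, so both $x,y\in V^n_i$, giving $|f(x)-f(y)|<1/n$; letting $n\to\infty$ yields $f(x)=f(y)$. Hence $f$ descends to a well-defined map $g:K_\alpha\to[0,1]$ with $f=g\circ\pi_\alpha$. Since $\pi_\alpha$ is a continuous surjection between compact Hausdorff spaces, it is a quotient map, so $g$ is automatically continuous. Therefore $Z_\alpha:=g^{-1}(0)\in\ZZ(K_\alpha)$ and $Z=f^{-1}(0)=\pi_\alpha^{-1}(Z_\alpha)$, as required.

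The only delicate point is arranging the basic open sets to live at a common level: this rests on two easy observations, first that the cofinality of $\omega_1$ is uncountable so the supremum of countably many $\beta^n_i$ is again below $\omega_1$, and second that a basic open set pulled back from $K_{\beta^n_i}$ is also the pullback of an open set from any $K_\alpha$ with $\alpha\ge\beta^n_i$, via the bonding map $\pi^\alpha_{\beta^n_i}$. Everything else is standard compact Hausdorff bookkeeping, so I do not anticipate a serious obstacle.
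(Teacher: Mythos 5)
Your proof is correct, but it takes a genuinely different route from the paper's. You represent $Z$ as a zero set $f^{-1}(0)$ and in effect prove the stronger, reusable factorization fact that every continuous real-valued function on the limit factors through some countable level: the oscillation/compactness covers plus the uncountable cofinality of $\omega_1$ give a level $\alpha$ on whose fibres $f$ is constant, and continuity of the factor $g$ follows because $\pi_\alpha$ is a closed, hence quotient, map. The paper instead works directly with the $G_\delta$ structure of $Z$: it writes $Z=\bigcap_n\pi_{\alpha_n}^{-1}(V_n)$ as a countable intersection of basic open sets (compactness of $Z$ allows one to replace each open set $U_n\supseteq Z$ by a single basic set squeezed between them), pushes all the levels below a common $\alpha<\omega_1$, puts $Z_\alpha=\bigcap_n W_n$, and notes that $Z_\alpha=\pi_\alpha(Z)$ is closed as a continuous image of a compact set. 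Both arguments rest on the same two ingredients---compactness to extract finite data at each of countably many stages, and the regularity of $\omega_1$ to collect countably many indices below one level---and both use surjectivity of $\pi_\alpha$ at the end (you for the covering of $K_\alpha$ and the quotient-map continuity of $g$, the paper for $Z_\alpha=\pi_\alpha(Z)$). Your route is slightly longer and leans on the equivalence of closed $G_\delta$ sets with zero sets in compact spaces, which the paper records in its introduction anyway, but in exchange it establishes the general lemma that $C(K)=\bigcup_{\alpha<\omega_1}\{g\circ\pi_\alpha : g\in C(K_\alpha)\}$, which is useful well beyond this statement; the paper's argument is shorter and stays purely at the level of sets.
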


\begin{proof}
Sets of the form $\pi_\alpha^{-1}(V)$, where $\alpha<\kappa$ and $V\subseteq K_\alpha$ is open, give the canonical basis of $K$  (closed under countable unions).
Therefore if $Z\in\ZZ(K)$ then $Z=\bigcap_n \pi_{\alpha_n}^{-1}(V_n)$ for some $\alpha_n<\omega_1$ and some  open  $V_n\sub K_{\alpha_n}$.
Taking $\alpha>\sup_n \alpha_n$ we can write $Z=\bigcap_n\pi_\alpha^{-1}(W_n)$ for some open $W_n\sub K_\alpha$.
Let $Z_\alpha=\bigcap_n W_n$. Then $Z_\alpha$ is $G_\delta$ in $K_\alpha$, $\pi_\alpha^{-1}(Z_\alpha)=Z$ and $Z_\alpha=\pi_\alpha(Z)$ is closed.
\end{proof}

\section{Proof of Theorem \ref{main}}

We first describe a basic construction which will be used repeatedly.

\begin{lemma}\label{basic}
Let $K$ be a compact connected space, and let $\mu\in P(K)$ be a strictly positive measure. If $F\sub K$ is a closed set with $\mu(F)>0$, then there
are a compact connected space $\wh{K}$, a strictly positive measure $\wh{\mu}\in P(\wh{K})$ and a continuous surjection $f:\wh{K}\to K$ such that
$f[\wh{\mu}]=\mu$ and $\wn((f^{-1}(F))\neq\emptyset$.
\end{lemma}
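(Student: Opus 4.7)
My plan is to ``blow up'' $F$ by attaching a copy of $[0,1]$ to $K$ along a carefully chosen closed subset, in the spirit of a mapping cylinder, and spreading a Lebesgue-type measure across those cylinders. The right subset to attach along is $S := \supp (\mu|_F)$, the closed support of $\mu|_F$, which is a nonempty closed subset of $F$ with $\mu(S)=\mu(F)$. I would then define
\[
\wh K := (K \times \{0\}) \cup (S \times [0,1]) \sub K \times [0,1]
\]
with the subspace topology, take $f : \wh K \to K$ to be the first-coordinate projection, and set
\[
\wh \mu := \mu|_{K \sm S} \otimes \delta_0 + \mu|_S \otimes \lambda,
\]
where $\lambda$ is Lebesgue measure on $[0,1]$. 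This $\wh\mu$ is a Borel probability measure concentrated on $\wh K$.

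Next I would verify the easy properties. $\wh K$ is closed in the compact space $K \times [0,1]$, hence compact. Writing $\wh K = (K \times \{0\}) \cup \bigcup_{x \in S} (\{x\} \times [0,1])$, the base $K \times \{0\}$ is connected (as $K$ is), each vertical segment is connected and meets the base at $(x,0)$, and $S$ is nonempty, so $\wh K$ is connected. The projection $f$ is a continuous surjection, and for every Borel $B \sub K$,
\[
f[\wh \mu](B) = \wh \mu(B \times [0,1]) = \mu(B \sm S) + \mu(B \cap S) = \mu(B),
\]
so $f[\wh\mu] = \mu$. Finally, $S \times (0,1] = \wh K \cap (K \times (0,1])$ is open in $\wh K$, nonempty, and contained in $f^{-1}(F)$ because $S \sub F$, giving $\wn(f^{-1}(F)) \neq \emptyset$.

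The subtle step, which I expect to be the main obstacle, is establishing strict positivity of $\wh\mu$, and this is precisely where attaching over $S$ rather than all of $F$ is essential. Let $U \sub \wh K$ be open and nonempty. If some $(y,s) \in U$ has $s > 0$, then $y \in S$, and there is a basic neighborhood $(V \cap S) \times J' \sub U$ of $(y,s)$ with $V$ open around $y$ in $K$ and $J' \sub (0,1]$ open; by the definition of the support $S$ one has $\mu(V \cap S) > 0$, so $\wh\mu(U) \geq \mu(V \cap S)\lambda(J') > 0$. Otherwise every point of $U$ has $s=0$, which forces $U \sub (K \sm S) \times \{0\}$ (because a point $(x,0)$ with $x \in S$ has every neighborhood in $\wh K$ meeting the open set $S \times (0,1]$), so $U = W \times \{0\}$ for some nonempty open $W \sub K \sm S$, and $\wh\mu(U) = \mu(W) > 0$ by strict positivity of $\mu$. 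Had I attached cylinders over all of $F$, columns $\{x\} \times (0,1]$ over $x \in F \sm \supp(\mu|_F)$ would be open in the resulting space yet carry zero $\wh\mu$-mass; passing to $S$ avoids this collapse while keeping $S \sub F$, so the interior requirement on $f^{-1}(F)$ is preserved.
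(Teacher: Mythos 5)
Your construction is essentially identical to the paper's: you attach $[0,1]$-fibres over $F_0=\supp(\mu|_F)$ inside $K\times[0,1]$, project back, and use the same mixture of $\mu$ on the base and $\mu\otimes\lambda$ on the cylinder (your formula and the paper's define the same measure, since $\mu(F\sm F_0)=0$). The argument is correct, and your verification of strict positivity and of the openness of $S\times(0,1]$ simply fills in details the paper leaves implicit.
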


\begin{proof}
Let $F_0$ be the support of $\mu$ restricted to $F$, that is
\[ F_0=F\sm\bigcup\{U: U\mbox{ open and } \mu(F\cap U)=0\}.\]
Let $\wh{K}=\{(x,t)\in K\times [0,1]: x\in F_0\mbox{ or } t=0\}$. Then $\wh{K}$ is clearly a compact connected space and $f(x,t)=x$ defines a continuous surjection
$f:\wh{K}\to K$. Moreover, the set $f^{-1}(F)$ contains $F_0\times [0,1]$, a set with non-empty interior. Hence $\wn (f^{-1}(F))\neq\emptyset.$

We can define $\wh{\mu}\in P(\wh{K})$ with the required property by setting
\[\wh{\mu}(B)=\mu(f(B\cap (K\sm F)\times\{0\}))+ \mu\otimes\lambda (F\times [0,1]\cap B),\]
for Borel sets $B\sub \wh{K}$, where $\lambda$ is the Lebesgue measure on $[0,1]$.
\end{proof}

\begin{lemma}\label{basic2}
Let $K$ be a compact connected space, and let $\mu\in P(K)$ be a strictly positive measure.
Then there are a compact connected space $K^\#$, a strictly positive measure ${\mu}^\#\in P({K^\#})$ and a continuous surjection $g: K^\#\to K$ such that
$g[\mu^\#]=\mu$ and $\wn((g^{-1}(Z))\neq\emptyset$ for every $Z\in\ZZ(K)$ with $\mu(Z)>0$.
\end{lemma}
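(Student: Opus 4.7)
The plan is to iterate Lemma \ref{basic} transfinitely, handling each $Z \in \ZZ(K)$ of positive measure in turn, and to take $K^\#$ to be the inverse limit of the resulting system.

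I would enumerate $\{Z \in \ZZ(K) : \mu(Z) > 0\}$ as $\{Z_\alpha : \alpha < \kappa\}$ with $\kappa$ a limit ordinal (repeating entries if necessary), and build by transfinite recursion an inverse system $\langle K_\alpha, \mu_\alpha, \pi^\alpha_\beta : \beta < \alpha < \kappa \rangle$ satisfying 2(i)-(iv), starting from $K_0 = K$ and $\mu_0 = \mu$. At a successor stage $\alpha+1$, set $F_\alpha = (\pi^\alpha_0)^{-1}(Z_\alpha) \sub K_\alpha$; by 2(iv), $\mu_\alpha(F_\alpha) = \mu(Z_\alpha) > 0$, so Lemma \ref{basic} applied to $(K_\alpha, \mu_\alpha, F_\alpha)$ yields $K_{\alpha+1}$, $\mu_{\alpha+1}$, a continuous surjection $\pi^{\alpha+1}_\alpha : K_{\alpha+1} \to K_\alpha$ with $\pi^{\alpha+1}_\alpha[\mu_{\alpha+1}] = \mu_\alpha$, and a non-empty open set $V_\alpha \sub (\pi^{\alpha+1}_\alpha)^{-1}(F_\alpha) = (\pi^{\alpha+1}_0)^{-1}(Z_\alpha)$; the remaining bonding maps are defined by composition. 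At a limit stage $\lambda < \kappa$, let $K_\lambda$ be the inverse limit with the canonical projections and $\mu_\lambda$ the measure given by Theorem \ref{2:is}(b); parts (a) and (c) of that theorem ensure $K_\lambda$ is compact connected and $\mu_\lambda$ is strictly positive.

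Finally, I take $K^\#$ to be the limit of the whole system, with projections $\pi_\alpha : K^\# \to K_\alpha$, measure $\mu^\#$ from Theorem \ref{2:is}(b), and $g := \pi_0 : K^\# \to K$. Theorem \ref{2:is} then delivers that $K^\#$ is compact connected, $\mu^\#$ is strictly positive, and $g[\mu^\#] = \mu$. For the interior condition, given $Z \in \ZZ(K)$ with $\mu(Z) > 0$, I write $Z = Z_\alpha$; then $\pi_{\alpha+1}^{-1}(V_\alpha) \sub g^{-1}(Z)$ is open and non-empty (the latter because $\pi_{\alpha+1}$ is a surjection between compact spaces), so $\wn(g^{-1}(Z)) \neq \emptyset$. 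The main obstacle is essentially bookkeeping---verifying that 2(i)-(iv) propagate through the recursion (automatic from Lemma \ref{basic} at successors and from Theorem \ref{2:is} at limits) and that non-empty open sets pull back to non-empty open sets---rather than any new conceptual difficulty beyond what Lemma \ref{basic} already provides.
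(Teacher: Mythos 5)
Your proposal is correct and follows essentially the same route as the paper: enumerate the positive-measure zero sets, iterate Lemma \ref{basic} at successor stages with $F=(\pi^\alpha_0)^{-1}(Z_\alpha)$, use Theorem \ref{2:is} at limits, and take $g=\pi_0$ on the limit $K^\#$. Your explicit remarks (that $\mu_\alpha(F_\alpha)=\mu(Z_\alpha)>0$ by 2(iv), and that the witnessing open set pulls back to a non-empty open subset of $g^{-1}(Z)$ because $\pi_{\alpha+1}$ is a continuous surjection) merely spell out steps the paper leaves implicit.
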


\begin{proof}
Let $\{Z_\alpha:\alpha<\kappa\}$ be an enumeration of all sets $Z\in\ZZ(K)$ of positive measure. Setting $K_0=K$, $\mu_0=\mu$ we define
inductively  an inverse system $\langle K_\alpha,\mu_\alpha, \pi^\alpha_\beta: \beta<\alpha<\kappa\rangle$ satisfying 2(i)-(iv).
Assume the construction  for all $\alpha<\xi$.

If  $\xi$ is the limit ordinal we use Theorem \ref{2:is} and let $K_\xi$ be the limit of $K_\alpha$, $\alpha<\kappa$, and $\mu_\xi$ be the unique measure
as in \ref{2:is2}.

If $\xi=\alpha+1$ then we define $K_\xi$ and $\mu_\xi\in P(K_\xi)$ applying Lemma \ref{basic} to
$K=K_\alpha$, $\mu=\mu_\alpha$, $F=(\pi^\alpha_0)^{-1}(Z_\alpha)$.

Then we can define  $K^\#$ and $\mu^\#$ as the limit of $\langle K_\alpha,\mu_\alpha, \pi^\alpha_\beta: \beta\le\alpha<\kappa\rangle$
and set $g=\pi_0: K^\#\to K$.

Indeed, if $Z\in\ZZ(K)$ and $\mu(Z)>0$ then $Z=Z_\alpha$ for some $\alpha<\kappa$ so the interior of the set
\[(\pi_0^{\alpha+1})^{-1}(Z_\alpha)=(\pi^{\alpha+1}_\alpha)^{-1}((\pi_0^\alpha)^{-1}(Z_\alpha),\]
is nonempty by the basic construction of Lemma \ref{basic}. It follows that $\wn(g^{-1}(Z_\alpha))\neq\emptyset$, and we are done.
\end{proof}

We are now ready for the proof of Theorem \ref{main}. Let $L_0=[0,1]$ and $\mu_0=\lambda$.
Using Lemma \ref{basic2} we define an inverse system
$\langle L_\alpha,\mu_\alpha, \pi^\alpha_\beta: \beta\le\alpha<\omega_1\rangle$, where
$L_{\alpha+1}= (L_\alpha)^\#$ and $\mu_{\alpha+1}=(\mu_\alpha)^\#$.
Consider the limit $L$ of this inverse system with the limit measure $\nu\in P(L)$.

We shall check that $\nu$ is a normal measure using Lemma \ref{1:1}.
Take $Z\in\ZZ(L)$ with $\nu(Z)>0$.
It follows from Lemma \ref{2:is2} that $Z=\pi_\alpha^{-1}(Z_\alpha)$ for some $\alpha<\omega_1$ and $Z_\alpha\in\ZZ(L_\alpha)$.
Then the set
$(\pi^{\alpha+1}_\alpha)^{-1}(Z_\alpha)$
has non-empty interior in $L_{\alpha+1}=(L_\alpha)^\#$ and, consequently, $\wn(Z)\neq\emptyset$.

Note that in a compact space $K$  of topological weight $w(K)\le\con$ there are at most $\con$ many closed $G_\delta$ sets.  It follows from the proof of Lemma
\ref{basic2} that $w(K^\#)\le\con$ whenever $w(K)\le\con$. Therefore $w(L_\alpha)\le\con$ for every $\alpha<\omega_1$ and $w(L)=\con$.
This finishes the proof of our main result.

\bigskip

Let us remark that using Lemma \ref{basic} and the construction from Kunen \cite{Ku81} one can prove the following variant of Theorem \ref{main}.

\begin{theorem}\label{kunen}
Assuming the continuum hypothesis, there is a perfectly normal compact connected space $L$ supporting a normal probability measure.
\end{theorem}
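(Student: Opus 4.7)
The plan is to refine the construction from the proof of Theorem \ref{main} by interleaving, at each successor stage of the $\omega_1$-length inverse system, additional operations that secure perfect normality of the limit. Under CH one has $\con=\omega_1$, so both the closed $G_\delta$ sets of positive measure (obstructions to normality, addressed through Lemma \ref{1:1}) and the closed non-$G_\delta$ sets (obstructions to perfect normality, addressed through a Kunen-style modification of Lemma \ref{basic}) can be enumerated in a single $\omega_1$-sequence and handled one at a time.

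Concretely, set $L_0=[0,1]$, $\mu_0=\lambda$, and build $\langle L_\alpha,\mu_\alpha,\pi^\alpha_\beta:\beta\le\alpha<\omega_1\rangle$ by transfinite recursion with a bookkeeping function that assigns to each successor $\alpha+1<\omega_1$ either a pair $(\beta,Z)$ with $Z\in\ZZ(L_\beta)$ of positive measure, or a pair $(\beta,F)$ with $F$ a closed subset of $L_\beta$; under CH, each $L_\beta$ has weight at most $\omega_1$ and thus at most $\omega_1$ closed subsets, so such a bookkeeping can be arranged to meet every obligation. At a stage of the first kind, apply Lemma \ref{basic} to $L_\alpha$ and $(\pi^\alpha_\beta)^{-1}(Z)$ to produce $L_{\alpha+1}$, $\mu_{\alpha+1}$, and the bonding map. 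At a stage of the second kind, apply the variant of the construction from \cite{Ku81}: it produces a compact connected $L_{\alpha+1}$ with a continuous surjection onto $L_\alpha$ pushing a strictly positive $\mu_{\alpha+1}$ forward to $\mu_\alpha$ and making $(\pi^{\alpha+1}_\beta)^{-1}(F)$ a zero set in $L_{\alpha+1}$. Limit stages are inverse limits as in Theorem \ref{2:is}.

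The limit $L$ is compact, connected, and carries a strictly positive measure $\nu\in P(L)$ by Theorem \ref{2:is}. Normality of $\nu$ follows from Lemma \ref{1:1}: any $Z\in\ZZ(L)$ of positive $\nu$-measure descends by Lemma \ref{2:is2} to some $Z_\beta\in\ZZ(L_\beta)$, whose pullback under $\pi^{\alpha+1}_\beta$ has non-empty interior at the stage $\alpha+1$ scheduled for $(\beta,Z_\beta)$. Perfect normality follows by a parallel descent argument for general closed sets: each closed $F\sub L$ comes from a closed set in some $L_\beta$, and is turned into a zero set at its scheduled stage. The main obstacle is importing from \cite{Ku81} the modification of Lemma \ref{basic} that converts an arbitrary closed set into a zero set after one extension, while preserving connectedness and strict positivity of the measure; a secondary point is to verify, using CH and the continuity of the inverse system, that every closed subset of $L$ indeed descends to a closed subset of some intermediate $L_\beta$.
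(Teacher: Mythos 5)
The normality half of your scheme is sound: under CH a one-task-per-successor bookkeeping that meets every pair $(\beta,Z)$ with $Z\in\ZZ(L_\beta)$ of positive measure, applying Lemma \ref{basic} at the scheduled stage, does yield (via Theorem \ref{2:is}, Lemma \ref{2:is2} and Lemma \ref{1:1}) a compact connected limit carrying a strictly positive normal measure, just as in the proof of Theorem \ref{main}. The perfect-normality half, however, rests on a step that fails. You assume that every closed $F\sub L$ is of the form $\pi_\beta^{-1}(F_\beta)$ for some $\beta<\omega_1$ and some closed $F_\beta\sub L_\beta$, and you label this a ``secondary point to verify''. It is not: Lemma \ref{2:is2} applies only to closed $G_\delta$ sets, and in an $\omega_1$-limit a general closed set need not be determined at any countable stage (already in $[0,1]^{\omega_1}$, the limit of the cubes $[0,1]^\alpha$, singletons are closed sets that are not preimages from any countable stage). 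Worse, if your bookkeeping performs one extension per successor, every $L_\beta$ is compact metrizable, so every closed $F_\beta\sub L_\beta$ is already a zero set and its preimage is automatically a zero set at all later stages: your type-(b) stages then accomplish nothing, while the unproved descent claim by itself would already give perfect normality --- the argument is circular, with all the content hidden in that claim. (If instead you let the stages have weight $\omega_1$, the count ``at most $\omega_1$ closed subsets'' is wrong: a compactum of weight $\omega_1$ can have $2^{\omega_1}>\omega_1$ closed subsets, so they cannot all be enumerated along $\omega_1$.)

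What is actually needed --- and what the paper deliberately delegates to Kunen's CH construction \cite{Ku81}, stating the theorem only as a remark --- is a mechanism forcing the limit to be hereditarily Lindel\"of, which for compact $L$ is equivalent to perfect normality. Kunen's point is not that a given closed set can be turned into a zero set by one extension; it is that, under CH, one can enumerate suitable \emph{countable} data (countable approximations to potential uncountable right-separated or discrete configurations) and choose, at each successor stage, the closed set over which the splitting is performed so that no such configuration survives to the limit; Lemma \ref{basic} supplies the connected, measure-preserving version of that splitting step. Perfect normality concerns the $2^{\omega_1}$ closed subsets of $L$ itself, almost none of which are visible at countable stages, so it cannot be secured by converting individually enumerated closed subsets of the $L_\beta$ into zero sets. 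Without importing Kunen's diagonalization in this form, your construction proves no more than Theorem \ref{main}.
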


Perfect normality of $L$  means that every closed subset of $L$ is $G_\delta$ so in particular the space $L$ from Theorem \ref{kunen} is first-countable.

\end{document}